\newtheorem{theorem}{Theorem}[section]
\newtheorem{lemma}[theorem]{Lemma}
\theoremstyle{definition}
\def\S{{\mathcal S}}
\def\U(#1,#2){\mathop{\mathcal U}(#1,#2)}
\def\In(#1){\mathop{\rm In}(#1)}
\def\Ch(#1){\mathop{\rm Ch}(#1)}
\newcommand*{\atfrac}[2]{\genfrac{}{}{0pt}{}{#1}{#2}}
\def\n{\the\m\global\advance\m by 1}
\begin{document}

\baselineskip=17pt

\title{Old recurrence formulae for growth series of Coxeter groups}

\author{ Jan Dymara (Wroc\l aw)}

\begin{abstract}
Several classical formulae for the growth series of a Coxeter group
are proved in a new way, using the structure of the Coxeter complex,
the Davis complex, or the Tits non-complex.
\end{abstract}

\subjclass[2010]{Primary 20F55; Secondary 57M07}

\keywords{Coxeter group, growth series}

\maketitle

\section{Introduction}

Throughout this paper $(W,S)$ is a Coxeter system. This means that $W$ is the group generated by the finite set $S$,
subject to relations of the form $(st)^{m_{st}}=1$, where $m_{ss}=1$ and for $t\ne s$ we have
$m_{st}=m_{ts}\in\{2,3,\ldots,\infty\}$ ($\infty$ means no relation). We will usually say
``Coxeter group $W$'' instead of mentioning the whole system. We denote by $\ell(w)$ the word
length of an element $w\in W$ with respect to the generating set $S$. For any subset $T\subseteq S$
the subgroup $W_T$ of $W$ generated by $T$ is also a Coxeter group, with relations being the
relevant relations of $W$. The word length in $W_T$ agrees with the restriction to $W_T$ of
the word length of $W$; we denote both by  $\ell$. The growth series of $W$ is the formal
power series $W(t)=\sum_{w\in W}t^{\ell(w)}$. We will prove several well-known formulae
for this function. For infinite $W$:
\begin{equation}
  \sum_{T\subseteq S}\frac{(-1)^{|T|}}{W_T(t)}=0.
\end{equation}
If $W$ is finite, it has a unique element of longest length $m$. Then
\begin{equation}
\sum_{T\subseteq S}\frac{(-1)^{|T|}}{W_T(t)}=\frac{t^m}{W(t)}.\end{equation}
A subset $T\subseteq S$ is called spherical if $W_T$ is finite. We denote
by $\S$ the set of all spherical subsets of $S$. Let $\chi_T=\sum_{T\subseteq U\in \S}(-1)^{|U|}$.
Then
\begin{equation}\sum_{T\in\S}\frac{(-1)^{|T|}\chi_T}{W_T(t)}=\frac{1}{W(t)};\end{equation}
\begin{equation}\sum_{T\in\S}\frac{(-1)^{|T|}}{W_T(t)}=\frac{1}{W(t^{-1})}.\end{equation}

These formulae are classical and quite popular, cf.~[Bou, Ex.~IV.26], [D, Chapter Seventeen], [ChD];
variants of Coxeter group growth series are 
investigated even nowadays, cf.~[OS], [BGM, Prop.~8.3]. 
The usual proofs use an inclusion--exclusion principle for appropriate
subsets of $W$, or induction on the cardinality of $S$.
Our goal is to use spaces on which $W$ acts to establish (1), (2), (3) and (4)
in an almost uniform way.

\section{Some properties of Coxeter groups}

The basic reference is [Bou], a newer one [D].

\begin{enumerate}[label=$\bullet$]
\item{For any $T\subseteq S$, and any $w\in W$, the coset $wW_T$
has a unique shortest element, say $u$. Moreover, if $w=uw'$, then
$\ell(w)=\ell(u)+\ell(w')$. ([D, Lemma 4.3.1])}
\item{If $W$ is finite, then it has a unique element $w_0$ of longest length
$m=\ell(w_0)$. Then, for any $w\in W$ we have $\ell(w)+\ell(w^{-1}w_0)=\ell(w_0)$
([D, Lemma 4.6.1]). Consequently, in $W$ there are as many elements of length $k$
as of length $m-k$. More succinctly, $W(t)=t^mW(t^{-1})$.}
\item{For any $w\in W$, the set $\In(w)=\{s\in S\mid \ell(ws)<\ell(w)\}$ is spherical ([D, Lemma 4.7.2]).}
\end{enumerate}

\section{Basic spaces}

The basic construction ([D], Chapter Five) is performed as follows.
Let $Y$ be a space (topological, or a simplicial complex) with a collection of subspaces $(Y_s)_{s\in S}$ (called panels).
For $y\in Y$ we put $S(y)=\{s\in S\mid y\in Y_s\}$. Then $\U(W,Y)=W\times Y/\sim$, where
$(w,y)\sim (w',y')\iff y=y'$ and $w^{-1}w'\in W_{S(y)}$. The image of $\{w\}\times Y$ in 
$\U(W,Y)$ will be called a chamber and denoted $wY$. The set of all chambers, $\Ch(\U(W,Y))$,
is in bijective correspondence with $W$, provided $Y\ne \bigcup_{s\in S}Y_s$.
Via this bijection the length function is transported to the set of chambers: $\ell(wY)=\ell(w)$.
The $W$-action on $\U(W,Y)$ is defined by $w(u,y)=(wu,y)$. For $T\subseteq S$ we put
$Y_T=\bigcap_{t\in T}Y_t$ (we call it the face of type $T$) and $Y^T=\bigcup_{t\in T}Y_t$;
this notation and terminology equivariantly extends to all chambers.
We will use the notation $\sigma<X$ to indicate that $\sigma$ is a simplex of a simplicial complex $X$. 

\smallskip
We will use three spaces arising from this construction.

\smallskip
1. The Coxeter complex $X_\Delta=\U(W,\Delta)$, where $\Delta$ is an $(|S|-1)$-dimensional
simplex, and $(\Delta_s)_{s\in S}$ is the collection of its codimension-1 faces.

\smallskip
2. The Davis complex $X_D=\U(W,D)$. The chamber model $D$ (the Davis chamber) is the subcomplex
of the first barycentric subdivision $\Delta'$ of $\Delta$ spanned by the barycentres of faces
of $\Delta$ that are of  spherical type (for finite $W$ one should include also the barycentre of the
empty face---we shall not consider this case).
Alternatively, it is the subcomplex $X_D$ of the first barycentric subdivision $X'_\Delta$ spanned by barycentres
of simplices of $X_\Delta$ that are contained in finitely many chambers. It is a locally finite
complex. The panels are $D_s=D\cap\Delta_s$, and the faces $D_T=D\cap\Delta_T$.
An important property: for spherical $T$ the face $D_T$ is contractible; indeed, it is
a cone with apex at the barycentre of $\Delta_T$. For non-spherical $T$ we have $D_T=\emptyset$.
For any simplex $\sigma<D$ we define its type
$S(\sigma)=\{s\in S\mid \sigma<D_s\}$; this definition extends by invariance to all simplices in $X_D$.

\smallskip
3. The Tits non-complex $X_\Delta^f=\U(W,\Delta^f)$. Here $\Delta^f$ is obtained from $\Delta$ by
removing the closed simplices whose types are non-spherical. It is not a simplicial complex.
For faces $\sigma$ of $\Delta$ of spherical type we still use the notation $\sigma<\Delta^f$,
even though, in the geometric realisation, some part of $\sigma$ may not be a subset of $\Delta^f$.

\smallskip
For infinite $W$, all of the above spaces are contractible;
$X_D$ and $X_\Delta^f$ even carry good CAT(0) metrics. Though they are
generally very useful, these properties will not be relied upon in this note.

\section{Euler characteristic generating function}

Let $X$ be either the Coxeter complex or the Davis complex for the Coxeter system $(W,S)$.
(The case of the Tits non-complex is slightly different and will be dealt with
in the last section.)
For a simplex $\sigma<X$ we define
its length $\ell(\sigma)$ as the minimum of $\ell(C)$, where $C$ runs through the chambers of $X$
that contain $\sigma$. There are finitely many chambers $C$ of $X$ of any given length,
and each of them contains finitely many simplices. Therefore the following formal
power series---the Euler characteristic generating function of $X$---is well defined:
\begin{equation}\chi_t(X)=\sum_{\sigma<X}(-1)^{\dim{\sigma}}t^{\ell(\sigma)}.\end{equation}
We will calculate this series in two ways: grouping simplices according to type,
and by local summation in $X$. Comparing the results we will obtain the formulae (1),(2) and (3).

\section{Summation according to type}

We consider $X=\U(W,Y)$ for $Y=\Delta$ or $D$. 
Let $\sigma<X$ be a simplex of type $T$. Then the chambers $C$ that contain $\sigma$ correspond
to elements of some coset $uW_T$. Assume that $u$ is the shortest element in that coset.
Then $\ell(\sigma)=\ell(u)$, and
\begin{equation}\sum_{C>\sigma}t^{\ell(C)}=\sum_{w\in uW_T}t^{\ell(w)}=
\sum_{v\in W_T}t^{\ell(u)}t^{\ell(v)}=t^{\ell(u)}W_T(t)=t^{\ell(\sigma)}W_T(t).\end{equation}
Multiplying both sides by $(-1)^{\dim{\sigma}}$ and summing over all simplices of type $T$ we get
\begin{equation}\sum_{C\in\Ch(X)}t^{\ell(C)}\sum_{\atfrac{\sigma<C}{S(\sigma)=T}}(-1)^{\dim{\sigma}}=
W_T(t)\sum_{\atfrac{\sigma<X}{S(\sigma)=T}}(-1)^{\dim\sigma}t^{\ell(\sigma)},\end{equation}
which implies
\begin{equation}
\frac{W(t)}{W_T(t)}\sum_{\atfrac{\sigma<Y}{S(\sigma)=T}}(-1)^{\dim{\sigma}}
=\sum_{\atfrac{\sigma<X}{S(\sigma)=T}}(-1)^{\dim\sigma}t^{\ell(\sigma)}.
\end{equation}
Finally, we sum over all $T\subseteq S$:
\begin{equation}W(t)\sum_{T\subseteq S}\frac{1}{W_T(t)}\sum_{\atfrac{\sigma<Y}{S(\sigma)=T}}(-1)^{\dim{\sigma}}=\chi_t(X).\end{equation}
We will calculate the inner sum separately in each case.

\smallskip
The Coxeter complex. For any proper subset $T\subset S$ there is one $\sigma<\Delta$ of type $T$, namely $\Delta_T$.
It has dimension ${|S|-|T|-1}$. Thus
\begin{equation}\chi_t(X_\Delta)=W(t)\sum_{T\subset S}\frac{(-1)^{|S|-|T|-1}}{W_T(t)}.\end{equation}

The Davis complex. Recall that the face $D_T$ is a cone with apex at the barycentre of $\Delta_T$;
let $L_T'$ be its base $D_T\cap\partial\Delta_T$. The simplices $\sigma<D$ of type $T$
are the interior simplices of this cone (including the apex), so that
\begin{equation}\sum_{\atfrac{\sigma<D}{S(\sigma)=T}}(-1)^{\dim{\sigma}}=1-\chi(L_T').\end{equation}
Simplices in $L_T'$ correspond to chains $U_1\subset\ldots\subset U_k$, where $U_i$
are spherical and properly contain $T$. This means that $L_T'$ is, as the notation suggests,
the barycentric subdivision of the following simplicial complex $L_T$. The vertices of
$L_T$ correspond to spherical sets of the form $T\cup\{u\}$ (with $u\in S-T$);
vertices $T\cup\{u_1\},\ldots,T\cup\{u_k\}$ span a simplex if $T\cup\{u_1,\ldots,u_k\}$
is spherical. Thus, simplices of $L_T$ correspond to spherical $U$ containing $T$, and
\begin{equation}1-\chi(L_T')=1-\chi(L_T)=\sum_{T\subseteq U\in\S}(-1)^{|U|-|T|}=(-1)^{|T|}\chi_T\end{equation}
for $\chi_T=\sum_{T\subseteq U\in\S}(-1)^{|U|}$. Finally, returning with this to (9) we get
\begin{equation}\chi_t(X_D)=W(t)\sum_{T\in\S}\frac{(-1)^{|T|}\chi_T}{W_T(t)}.\end{equation}


\section{Local summation}

We continue with the cases $Y=\Delta$ or $D$. 
Now we compute the sum $\sum_{\sigma<X}(-1)^{\dim{\sigma}}t^{\ell(\sigma)}$ by arranging its summands according to
chambers: we consider each $\sigma$ as part of the chamber realising $\ell(\sigma)$. We get
\begin{equation}\chi_t(X)=\sum_{C\in \Ch(X)}t^{\ell(C)}\sum_{\atfrac{\sigma<C}{\ell(\sigma)=\ell(C)}}(-1)^{\dim{\sigma}}
=\sum_{w\in W}t^{\ell(w)}\sum_{\atfrac{\sigma<wY}{\ell(\sigma)=\ell(w)}}(-1)^{\dim{\sigma}}.\end{equation}

\begin{lemma}
Let $\sigma<wY$. Then $\ell(\sigma)<\ell(w)$ if and only if $\sigma<(wY)^{\In(w)}$.
\end{lemma}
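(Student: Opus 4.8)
The plan is to reduce both sides of the claimed equivalence to a single combinatorial condition on the type $T=S(\sigma)$, namely that $\In(w)\cap T\ne\emptyset$, and then to read off the lemma.

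First I would identify the chambers containing $\sigma$. By the definition of $\U(W,Y)$, a point $(w,y)$ with $y$ in the relative interior of the face of type $T$ satisfies $S(y)=T$, so $(w,y)\sim(w',y)$ exactly when $w^{-1}w'\in W_T$. Hence the chambers $C>\sigma$ are precisely those indexed by the coset $wW_T$, and by the definition of $\ell(\sigma)$ together with the first bullet of Section~2 we get $\ell(\sigma)=\ell(u)$, where $u$ is the unique shortest element of $wW_T$. Consequently $\ell(\sigma)<\ell(w)$ holds if and only if $w\ne u$, i.e.\ if and only if $w$ fails to be the shortest representative of its coset $wW_T$.

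Next I would translate the condition ``$w$ shortest in $wW_T$'' into a statement about $\In(w)$. The claim is that $w$ is shortest in $wW_T$ precisely when $\ell(ws)>\ell(w)$ for every $s\in T$, that is, when $\In(w)\cap T=\emptyset$. One implication is immediate: if $w$ is shortest, then each $ws$ with $s\in T$ lies in the same coset and differs from $w$, so $\ell(ws)>\ell(w)$. For the converse I would argue contrapositively using the length-additive factorisation $w=uw'$ from the first bullet: if $w\ne u$ then $w'\ne e$, and writing a reduced word for $w'$ in $W_T$ and stripping its last letter $s\in T$ exhibits $ws=uw''$ as an element of length strictly smaller than $\ell(w)$, so $s\in\In(w)\cap T$. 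This equivalence between coset-minimality and disjointness from the descent set $\In(w)$ is the only point requiring genuine Coxeter-group input, and it is where I expect the real work to sit.

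Finally the right-hand side is essentially definitional: since the type of a simplex records exactly which panels contain it, $\sigma<(wY)_s$ holds if and only if $s\in S(\sigma)=T$, whence $\sigma<(wY)^{\In(w)}=\bigcup_{s\in\In(w)}(wY)_s$ is equivalent to $\In(w)\cap T\ne\emptyset$. Combining the two reductions, both $\ell(\sigma)<\ell(w)$ and $\sigma<(wY)^{\In(w)}$ are equivalent to $\In(w)\cap T\ne\emptyset$, which proves the lemma.
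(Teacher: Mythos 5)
Your proof is correct and takes essentially the same route as the paper's: both identify the chambers containing $\sigma$ with the coset $wW_T$, and both hinge on the fact that $w$ fails to be the shortest representative of $wW_T$ exactly when some $t\in T$ is a descent of $w$, i.e.\ $\In(w)\cap T\ne\emptyset$. The only difference is presentational—you chain equivalences through the condition $\In(w)\cap T\ne\emptyset$ and spell out (via the length-additive factorisation $w=uw'$ and stripping the last letter of a reduced word for $w'$) the step that the paper merely asserts with ``Therefore $\ell(wt)<\ell(w)$ for some $t\in T$.''
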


\begin{proof}
Let $T$ be the type of $\sigma$. The chambers containing $\sigma$ correspond
to elements of the coset $wW_T$.

Suppose $\ell(\sigma)<\ell(w)$. It follows that $w$ is not the shortest element of the coset $wW_T$.
Therefore $\ell(wt)<\ell(w)$ for some $t\in T$. This $t$ also belongs to  $\In(w)$.
Finally, $\sigma<wY\cap wtY=(wY)_t$.

Conversely, if $\sigma<(wY)^{\In(w)}$, then $\sigma<(wY)_s$ for some $s\in\In(w)$.
But then $\sigma<wsY$ and $\ell(\sigma)\le\ell(ws)<\ell(w)$. The Lemma is proved.
\end{proof}

By the Lemma, the inner sum in (14) is performed over simplices of $wY$ that are not in $(wY)^{\In(w)}$.
Thus
\begin{equation}
\begin{aligned}
\sum_{\atfrac{\sigma<wY}{\ell(\sigma)=\ell(w)}}(-1)^{\dim{\sigma}}&=
\chi(wY,(wY)^{\In(w)})=\chi(Y,Y^{\In(w)})\\
&= \chi(Y)-\chi(Y^{\In(w)})=1-\chi(Y^{\In(w)}),
\end{aligned}
\end{equation}
the last equality due to contractibility of $Y$.

We split the calculation of $\chi(Y^{\In(w)})$ into three cases.

\smallskip
\noindent 1) The generic case: $\In(w)$ is a proper non-empty subset of $S$, of some cardinality $k$.
Notice that for any $T\subseteq \In(w)$ (a fortiori, spherical)
the face $Y_T$ is contractible. Using the inclusion--exclusion principle
we get
\begin{equation}
\begin{aligned}
  \chi(&Y^{\In(w)})=\sum_{s\in\In(w)}\chi(Y_s)-\sum_{\atfrac{s,t\in\In(w)}{s\ne t}}\chi(Y_{\{s,t\}})+\ldots+(-1)^{k-1}\chi(Y_{\In(w)})\\
  &=k-\binom{k}{2}+\binom{k}{3}-\ldots+(-1)^{k-1}\binom{k}{k}=1-(1-1)^k=1.
\end{aligned}
\end{equation}
\noindent 2) The set $\In(w)$ is empty. Then $\chi(Y^{\In(w)})=\chi(\emptyset)=0$.
This happens exactly when $w=1$.

\smallskip
\noindent 3) The case $\In(w)=S$. This happens only if $W$ is finite and $w$ is the element of longest length (say length $m$).
Then $Y^S$ is a triangulation of the $(|S|-2)$-dimensional sphere, $\chi(Y^S)=1-(-1)^{|S|-1}$.

\smallskip
Plugging these results into (15), and then into (14), we get for infinite $W$
\begin{equation}\chi_t(X_\Delta)=1,\qquad \chi_t(X_D)=1,\end{equation}
while for finite $W$ with element of longest length $m$
\begin{equation}\chi_t(X_\Delta)=1+(-1)^{|S|-1}t^m.\end{equation}
Comparing these results with (10) and (13) we get (1), (2) and (3).

\section{The Tits non-complex}

Consider now the case $X=X_\Delta^f$. For $\sigma<X$  we define $L(\sigma)$ 
as the maximum of $\ell(C)$ over all chambers containing $\sigma$. This is finite, since the type of
$\sigma$ is spherical. Then we define $\chi^t(X)=\sum_{\sigma<X}(-1)^{\dim{\sigma}}t^{L(\sigma)}$.
As before, we will calculate this sum in two ways.

First, we group the summands according to the type of $\sigma$.
Let $\sigma<X$ be a simplex of (spherical!) type $T$. Then the chambers $C$ that contain $\sigma$ correspond
to elements of some finite coset $uW_T$. Assume that $u$ is the longest element in that coset.
Then $L(\sigma)=\ell(u)$, and
\begin{equation}
\begin{aligned}
  \sum_{C>\sigma}t^{\ell(C)}=\sum_{w\in uW_T}t^{\ell(w)}=\sum_{v\in W_T}t^{\ell(u)}t^{-\ell(v)}&=t^{\ell(u)}W_T(t^{-1})\\
  &=t^{L(\sigma)}W_T(t^{-1}).
\end{aligned}
\end{equation}
Multiplying both sides by $(-1)^{\dim{\sigma}}$($=(-1)^{|S|-|T|-1}$) and summing over all simplices of type $T$ (there is just one in each chamber) we get
\begin{equation}(-1)^{|S|-|T|-1}\sum_{C\in\Ch(X)}t^{\ell(C)}=
W_T(t^{-1})\sum_{\atfrac{\sigma<X}{S(\sigma)=T}}(-1)^{\dim\sigma}t^{L(\sigma)},\end{equation}
which implies
\begin{equation}
\frac{W(t)}{W_T(t^{-1})}(-1)^{|S|-|T|-1}
=\sum_{\atfrac{\sigma<X}{S(\sigma)=T}}(-1)^{\dim\sigma}t^{L(\sigma)}.
\end{equation}
Summing this equality over all spherical types $T$ we get
\begin{equation}W(t)\sum_{T\in \S}\frac{(-1)^{|S|-|T|-1}}{W_T(t^{-1})}=\chi^t(X^f_\Delta).\end{equation}
\smallskip

Second, we group summands in chambers according to $L$:
\begin{equation}\chi^t(X)=\sum_{\sigma<X}(-1)^{\dim{\sigma}}t^{L(\sigma)}=
\sum_{w\in W}t^{\ell(w)}\sum_{\atfrac{\sigma<w\Delta^f}{L(\sigma)=\ell(w)}}(-1)^{\dim{\sigma}}.\end{equation}
Every $\sigma<w\Delta^f$ is of the form $w\Delta_T$ for some spherical $T$; moreover,
$L(\sigma)=\ell(w)$ if and only if $w$ is the longest element in the coset $wW_T$, which
happens exactly when $T\subseteq \In(w)$. Therefore
\begin{equation}\sum_{\atfrac{\sigma<w\Delta^f}{L(\sigma)=\ell(w)}}(-1)^{\dim{\sigma}}=\sum_{T\subseteq\In(w)}(-1)^{|S|-|T|-1}=(-1)^{|S|-1}(1-1)^{|\In(w)|}.\end{equation}
Thus, the only non-zero summand corresponds to $w$ with empty $\In(w)$, that is to $w=1$.
We get \begin{equation}\chi^t(X^f_\Delta)=(-1)^{|S|-1}.\end{equation} 
Comparing (25) with (22), and switching $t$ and $t^{-1}$, we get (4).

\subsection*{Acknowledgements}

The author was partially supported by the Polish National Science Centre
(NCN) grant 2016/23/B/ST1/01556.

\normalsize

\address{

Instytut Matematyczny

Uniwersytet Wroc\l awski

pl.~Grunwaldzki 2/4,

50-384 Wroc\l aw

Poland}

\email{dymara@math.uni.wroc.pl}

\end{document}